\newtheorem{observation}[theorem]{Observation}
\def\cont{^\frown}
\def\L{\mathrm L}
\def\R{\mathrm R}
\def\X{\mathrm X}
\def\str#1{\mathbf {#1}}
\def\lexleq{{\leq_{\mathrm{lex}}}}
\def\lexlt{{<_{\mathrm{lex}}}}
\def\Alphabet{\Sigma}
\def\AmbStr#1{(#1,\allowbreak \lexleq\nobreak,\allowbreak\preceq\nobreak,\allowbreak\perp)}
\begin{document}
\mainmatter              
\title{Big Ramsey degrees of the generic partial order}
\titlerunning{Big Ramsey degrees of partial orders}  
%
\author{Martin Balko\inst{1} \and David Chodounsk\'y\inst{1} \and Natasha Dobrinen\inst{2} \and Jan Hubi\v cka\inst{1} \and\\
	Mat\v ej Kone\v cn\'y\inst{1} \and Lluis Vena\inst{3} \and Andy Zucker\inst{4}}
\authorrunning{Martin Balko et al.} 
%
%
\institute{Department of Applied Mathematics (KAM), Charles University, Ma\-lo\-stranské~nám\v estí 25, Praha 1, Czech Republic,\\
	\email{\{balko,chodounsky,hubicka,matej\}@kam.mff.cuni.cz},
	\and
	Department of Mathematics, University of Denver, C.M. Knudson Hall 302, Denver, USA,\\
	\email{natasha.dobrinen@du.edu}
	\and
	Universitat Polit\`ecnica de Catalunya, Barcelona, Spain,\\
	\email{lluis.vena@gmail.com}
	\and
	University of California San Diego, USA,\\
	\email{azucker@ucsd.edu}
}
\maketitle              

\begin{abstract}
	As a result of 33 intercontinental Zoom calls,
	we characterise big Ramsey degrees of the generic partial order in a
	similar way as Devlin characterised big Ramsey degrees of
	the generic linear order (the order of rationals).
	\keywords{structural Ramsey theory, big Ramsey degrees, Devlin type}
\end{abstract}
\section{Introduction}
Given partial orders $\str{A}$ and $\str{B}$, we denote by $\binom{\str{B}}{\str{A}}$ the set
of all embeddings from $\str{A}$ to $\str{B}$. We write $\str{C}\longrightarrow (\str{B})^\str{A}_{k,l}$ to denote the following statement:
for every colouring $\chi$ of $\binom{\str{C}}{\str{A}}$ with $k$ colours, there exists
an embedding $f\colon\str{B}\to\str{C}$ such that $\chi$ does not take more than $l$ values on $\binom{f(\str{B})}{\str{A}}$.
For a countably infinite partial order $\str{B}$ and its finite suborder $\str{A}$, the \emph{big Ramsey degree} of $\str{A}$ in $\str{B}$ is
the least number $l\in \mathbb N\cup \{\infty\}$ such that $\str{B}\longrightarrow (\str{B})^\str{A}_{k,l}$ for every $k\in \mathbb N$.

A partial order is \emph{homogeneous} if every isomorphism between
two of its finite
suborders extends to an automorphism.  It is well known that up to
isomorphism there is a unique homogeneous partial order $\str{P}=(P,\leq_\str{P})$ such that
every countable partial order has an embedding to $\str{P}$. The order $\str{P}$ is called
the \emph{generic partial order}. We refine the following recent result.

\begin{theorem}[Hubi\v cka~\cite{Hubicka2020CS}]
	The big Ramsey degree of every finite partial order
	in the
	generic order $\str{P}$ is finite.
\end{theorem}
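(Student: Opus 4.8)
The plan is to code $\str P$ by a tree over a small finite alphabet and then reduce an arbitrary finite colouring of the copies of a fixed finite $\str A$ to an application of the Carlson--Simpson theorem (an infinitary Hales--Jewett / dual Ramsey statement for parameter words), following the parameter-space method of the cited paper. Fix an enumeration $p_0,p_1,p_2,\dots$ of the underlying set $P$ and attach to each $p_n$ a word that records, for every $i<n$, the relation of $p_i$ to $p_n$: over the three-letter alphabet $\Alphabet=\{\L,\R,\X\}$ the $i$-th letter is $\L$ if $p_i\leq_{\str P}p_n$, it is $\R$ if $p_n\leq_{\str P}p_i$, and it is $\X$ if $p_i$ and $p_n$ are incomparable. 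This exhibits $\str P$ as the set of coding nodes of a coding tree $T\subseteq\Alphabet^{<\omega}$, which I view inside its natural enrichment $\AmbStr{T}$ by the lexicographic order, the prefix (tree) order, and incomparability. Not every level-injective $\Alphabet$-labelled tree arises this way, since transitivity of the coded order imposes constraints, so I would isolate the class of \emph{admissible} configurations whose codings are genuine partial orders, and check two things: a Fra\"\i ss\'e-style back-and-forth argument shows $\str P$ can be realised as an admissible tree whose levels exhaust $\omega$, so it is harmless to assume $\str P$ \emph{is} such a coding tree; and admissibility is a local condition, preserved --- after a bounded modification --- under the tree operations used below.

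Next I would introduce \emph{similarity types}. An embedding $e\colon\str A\to\str P$ spans a finite decorated configuration, consisting of the meet-closure of $e(\str A)$ in $T$, the distinction between \emph{coding nodes} (the images of elements of $\str A$) and pure branching nodes, the level-order and branching pattern of all these nodes, and, for each coding node $x$ and each level $\ell$ below $x$ that the configuration occupies, the letter that $x$ carries at $\ell$. Two embeddings have the same \emph{type} if these data are isomorphic. Since $|\str A|$ is fixed, such a configuration has a bounded number of nodes and levels, so there are only finitely many similarity types; write $d(\str A)$ for their number.

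For the Ramsey step, given $k\in\mathbb N$ and a colouring $\chi$ of $\binom{\str P}{\str A}$, I would transport $\chi$ to a colouring of finite admissible configurations, equivalently of suitable finite-dimensional combinatorial subspaces (parameter words) of $T$, using an \emph{envelope} argument to deal with the fact that a configuration spanned by an embedding of $\str A$ need not itself be a copy of $\str P$: it can be completed to a bounded envelope that sits inside a subcopy, and every embedding of $\str A$ is captured by envelopes of bounded size. Then I would apply the Carlson--Simpson theorem, in a parametrised/product form so as to vary several letters on several levels at once, obtaining an infinite admissible subconfiguration, isomorphic to the coding tree of $\str P$, on which $\chi$ depends only on the similarity type of the embedding. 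The induced embedding $f\colon\str P\to\str P$ then has the property that $\chi$ takes at most $d(\str A)$ values on $\binom{f(\str P)}{\str A}$, which proves the theorem with the explicit bound $d(\str A)$; showing in addition that every similarity type is unavoidable would upgrade this to the exact value of the big Ramsey degree.

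The step I expect to be the main obstacle is the interaction of the tree structure with the transitivity constraints defining admissibility. Unlike the case of the rationals, where the passing letters of distinct elements on a common level are unconstrained --- so that Milliken's tree theorem applies directly to $2^{<\omega}$ --- here the $\L/\R/\X$ labels of different elements on one level are coupled (for instance, two elements both below a third are constrained in their mutual label), so the Ramsey theorem cannot be run on the free space of all $\Alphabet$-labelled trees. The remedy, which is the technical core of the argument, is to apply the parameter-space Ramsey theorem to an auxiliary \emph{free} space and then reinstate the structural constraints by passing to a subconfiguration and performing a bounded, uniform correction, verifying that the result still contains a copy of $\str P$ and that the number of similarity types stays finite. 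Making the notion of admissible configuration robust under all of these operations, while keeping the envelope bookkeeping and the correction uniform, is where essentially all of the work lies.
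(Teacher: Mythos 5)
You have correctly identified the overall strategy behind the cited result (this extended abstract does not reprove the theorem: it quotes it from Hubi\v cka's parameter-space paper and only reproduces its key ingredients, namely the alphabet $\Alphabet=\{\L,\X,\R\}$, the relations $\preceq$ and $\perp$ on $\Sigma^*$, and the embedding $\varphi$). Coding elements of $\str{P}$ by words recording their relations to earlier elements, reducing a colouring of $\binom{\str{P}}{\str{A}}$ to a colouring of finite configurations of words via envelopes, and invoking the Carlson--Simpson theorem for parameter words with a count of finitely many similarity types is indeed the right skeleton, and it yields a finite (not exact) bound, which is all the statement asks for.

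However, there is a genuine gap exactly at the point you flag as ``where essentially all of the work lies.'' Your plan is to run the Ramsey theorem on a free space of $\Alphabet$-labelled trees and then \emph{reinstate} the transitivity/admissibility constraints by passing to a subconfiguration and performing a ``bounded, uniform correction.'' That repair step is unsupported, and it is precisely where such arguments tend to break: after Carlson--Simpson one only controls colours of configurations lying inside the chosen infinite-dimensional subspace, and any subsequent modification of nodes or levels can change which finite configurations (and hence which colour classes) are realised, as well as destroy the property that the corrected object still contains a copy of $\str{P}$ in a way compatible with the envelope bookkeeping. The actual mechanism in the cited proof removes the need for any correction: the relations $\preceq$ and $\perp$ are defined on \emph{all} of $\Sigma^*$, so the free parameter space itself carries a partial order, $(\Sigma^*,\preceq)$ is universal for countable partial orders, and the explicit embedding $\varphi$ (Proposition 4.7 of the cited paper, restated here) produces mutually compatible words; consequently the Carlson--Simpson theorem is applied directly to the unrestricted space of words, subspaces are again isomorphic to the full space and hence still contain copies of $\str{P}$, and no admissibility class or post hoc repair ever enters. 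Without this idea (or a worked-out substitute for your correction step), your argument does not close; with it, your sketch collapses to essentially the cited proof.
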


We characterise the big Ramsey degrees of $\str{P}$ using special sets
of finite words. Our characterisation also leads to a big Ramsey structure with applications in topological dynamics~\cite{zucker2017}.

Currently, there are only relatively few classes of structures where big Ramsey degrees are fully understood.
The Ramsey theorem implies that the big Ramsey degree of every finite linear order in the order of $\omega$ is 1.
In 1979, Devlin refined upper bounds by Laver and characterised big Ramsey degrees of the order of rationals~\cite{devlin1979,todorcevic2010introduction}.  Laflamme, Sauer and Vuksanovi\'c characterised big Ramsey degrees of the Rado (or random) graph and related random structures in binary languages~\cite{Laflamme2006}.
Recently, a characterisation of big Ramsey degrees of the triangle-free Henson graph was obtained by Dobrinen~\cite{dobrinen2020ramsey} and independently by the remaining authors of this abstract. See also~\cite{coulson2020substructure}.
Here, we characterize the structures that give the big Ramsey degrees for the generic partial order.

Our construction makes use of the following partial order introduced in~\cite{Hubicka2020CS} (which is closely tied to
the
order of 1-types within a fixed enumeration of $\str{P}$).
Let $\Sigma=\{\L,\X,\R\}$ be an \emph{alphabet} ordered by $\lexlt$ as
$\L\lexlt \X\lexlt \R$.  We denote by
$\Sigma^*$ the set of all finite words in the alphabet $\Sigma$, by
$\lexleq$ their lexicographic order, and by $|w|$ the length of the word $w$ (whose characters are indexed by natural numbers starting at $0$).
For $w,w'\in \Sigma^*$, we set $w\prec w'$ if and only if there exists $i$ such that $0\leq i<\min(|w|,|w'|)$, $(w_i,w'_i)=(\L,\R)$, and
$w_j\leq_{\mathrm{lex}}w'_j$ for every $0\leq j< i$.
It is not difficult to check
that $(\Sigma^*,\preceq)$ is a partial order and that $(\Sigma^*,\lexleq)$ is one of its linear extensions~\cite{Hubicka2020CS}.

We write $w\perp w'$ if $w_i\lexlt w'_i$ and $w'_{j}\lexlt w_{j}$ for some $0\leq i,j< \min{(|w|,|w'|)}$.
Note that it is not necessarily true that $(w\not\preceq w'\land w'\not\preceq w)\iff w\perp w'$, as, for example, $\L\R\preceq \R\L$, $\L\R\perp \R\L$, $\X\not\preceq \R$, $\X\not\perp \R$.
However, we will construct subsets of $\Sigma^*$ where this is satisfied.
We call words $w$ and $w'$ \emph{related} if one of expressions $w\preceq w'$, $w'\preceq w$ or $w\perp w'$ holds, otherwise they are \emph{unrelated}.

Given a word $w$ and an integer $i \geq 0$, we denote by $w|_i$ the \emph{initial segment} of $w$ of length $i$.
For $S\subseteq \Sigma^*$, we let $\overline{S}$ be the set $\{w|_i\colon w\in S, 0\leq i\leq |w|\}$.
The set $\Sigma^*$ can be seen as a rooted ternary tree and sets $S=\overline{S}\subseteq\Sigma^*$ as its subtrees.
Given $i\geq 0$, we let $S_i=\{w\in S:|w|=i\}$ and call it the \emph{level $i$ of $S$}.
A word $w\in S$ is called a \emph{leaf} of $S$ if there is no word $w'\in S$ extending $w$.
Let $L(S)$ be the set of all leafs of $S$. Given a word $w$ and a character $c\in \Sigma$, we denote by $w\cont c$
the word created from $w$ by adding $c$ to the end of $w$. We also set $S\cont c=\{w\cont c:w\in S\}$.

To characterise big Ramsey degrees of $\str{P}$, we introduce the following  technical definition, whose intuitive meaning is explained at the end of this section.

\begin{definition}
	\label{def:balko}
	A set $S\subseteq \Sigma^*$ is called a \emph{poset-type} if $S=\overline{S}$ and precisely one of the following four conditions is satisfied for every $i$ with $0\leq i< \max_{w\in S}|w|$:
	\begin{enumerate}
		\item \textbf{Leaf:}  There is $w\in S_i$ related to every $u\in S_i\setminus\{w\}$ and $S_{i+1}=(S_i\setminus \{w\} )\cont \X.$
		\item \textbf{Branching:}  There is $w\in S_i$ such that $$S_{i+1}=\{z\in S_i:z\lexlt w\}\cont \X\cup\{w\cont \X,w\cont\R\}\cup \{z\in S_i:w\lexlt z\}\cont \R.$$
		\item \textbf{New $\perp$:} There are unrelated words $v\lexlt w\in S_i$ such that
		      \begin{eqnarray*}
			      S_{i+1}&=&\{z\in S_i:z\lexlt v\}\cont \X
			      \cup \{v\cont \R\}\\
			      &&\cup\{z\in S_i:v\lexlt z\lexlt w\hbox{ and }z\perp v\}\cont \X\\
			      &&\cup\{z\in S_i:v\lexlt z\lexlt w\hbox{ and } z\not\perp v\}\cont \R\\
			      &&\cup \{{w}\cont \X\}\cup \{z\in S_i:w\lexlt z\}\cont \R.
		      \end{eqnarray*}
		      Moreover, the following assumption is satisfied:
		      \begin{enumerate}[label=(\Alph*)]
			      \item\label{A2} For every $u\in S_i$, $v\lexlt u\lexlt w$ implies that at least one of $u\perp v$ or $u\perp w$ holds.
		      \end{enumerate}
		\item \textbf{New $\preceq$:}  There are unrelated words $v\lexlt w\in S_i$ such that
		      \begin{eqnarray*}
			      S_{i+1}&=&\{z\in S_i:z\lexlt v\hbox{ and }z\perp v\}\cont \X \cup \{z\in S_i:z\lexlt v\hbox{ and }z\not \perp v\}\cont \L\\
			      &&\cup \{v\cont \L\}\cup\{z\in S_i:v\lexlt z\lexlt w\}\cont \X\cup \{{w}\cont \R\}\\
			      &&\cup \{z\in S_i:w\lexlt z\hbox{ and }w\perp z\}\cont\X\\
			      &&\cup \{z\in S_i:w\lexlt z\hbox{ and }w\not\perp z\}\cont\R.
		      \end{eqnarray*}
		      Moreover, the following assumptions are satisfied:
		      \begin{enumerate}[label=(B\arabic*)]
			      \item\label{B1} For every $u\in S_i$ such that $u\lexlt v$, at least one of $u\preceq w$ or $u\perp v$ holds.
			      \item\label{B2} For every $u\in S_i$ such that $w\lexlt u$, at least one of $v\preceq u$ or $w\perp u$ holds.
		      \end{enumerate}
	\end{enumerate}
\end{definition}

Given a finite partial order $\str{Q}$, we let $T(\str{Q})$ be the set of all poset-types $S$ such that $(L(S),\preceq)$ is isomorphic to $\str{Q}$.
As our main result, we determine the big Ramsey degrees of $\str{P}$.

\begin{theorem}\label{thm:main}
	For every finite partial order $\str{Q}$, the big Ramsey degree of $\str{Q}$ in the generic partial order $\str{P}$ equals $|T(\str{Q})|\cdot |\mathrm{Aut}(\str{Q})|$.
\end{theorem}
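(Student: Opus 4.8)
The plan is to prove the two inequalities separately, using the standard two-part template for big Ramsey degree results: an upper bound coming from a "canonical partition" / Ramsey-type argument on the tree $\Sigma^*$, and a lower bound coming from an explicit unavoidable colouring.

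First I would set up the encoding. Fix an enumeration $p_0, p_1, \dots$ of the vertices of $\str{P}$; to each vertex $p_n$ assign a word $w(p_n) \in \Sigma^n$ recording, for each $m < n$, whether $p_m < p_n$ (put $\R$... — actually whatever convention makes $\preceq$ on words match $<_\str{P}$ and $\perp$ match incomparability-with-a-witnessing-pair, as set up before the definition). The crucial structural fact to prove is: for a finite $\str{Q}$-indexed subset $F \subseteq \str{P}$, the subtree $\overline{\{w(p) : p \in F\}}$, after "passing to a subcopy" where all the encoding words live in generic position, is exactly a \emph{poset-type} $S$ with $(L(S),\preceq) \cong \str{Q}$, and moreover every poset-type in $T(\str{Q})$ is realised this way inside $\str{P}$. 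The $|\mathrm{Aut}(\str{Q})|$ factor appears because a poset-type records the shape of the tree but an actual copy of $\str{Q}$ also carries a labelling of its points, and the leaves of $S$ can be matched to $\str{Q}$ in $|\mathrm{Aut}(\str{Q})|$ ways. So the claim is that the "exact big Ramsey degree" equals the number of (poset-type, labelling) pairs, which is $|T(\str{Q})| \cdot |\mathrm{Aut}(\str{Q})|$.

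For the upper bound $\le |T(\str{Q})| \cdot |\mathrm{Aut}(\str{Q})|$: given a colouring $\chi$ of $\binom{\str{P}}{\str{Q}}$, pull it back to a colouring of suitable finite subtrees of $\Sigma^*$. Using the already-cited finiteness theorem (Hubi\v cka~\cite{Hubicka2020CS}) together with a Milliken-style tree Ramsey theorem adapted to the poset parameter space — i.e. finding a subtree isomorphic to the generic object all of whose "poset-type copies of $\str{Q}$" of a given poset-type get the same colour — one extracts an embedding $f\colon\str{P}\to\str{P}$ such that $\chi$ on $\binom{f(\str{P})}{\str{Q}}$ depends only on the poset-type and labelling of the copy. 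That bounds the number of colours by the number of such pairs. I would lean on \cite{Hubicka2020CS} to avoid re-proving the pigeonhole; the work is verifying that the canonical objects produced by that machine are precisely the poset-types of Definition~\ref{def:balko}, which is why conditions \ref{A2}, \ref{B1}, \ref{B2} are there — they are exactly the constraints that survive in a "generic" subtree.

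For the lower bound $\ge |T(\str{Q})| \cdot |\mathrm{Aut}(\str{Q})|$: colour a copy of $\str{Q}$ in $\str{P}$ (w.r.t. the fixed enumeration) by the isomorphism type of the pair $(S, \text{labelling})$, where $S$ is the subtree its encoding words generate — but first one must argue this is well-defined, i.e. that inside any copy $f(\str{P})$ every poset-type $S \in T(\str{Q})$ together with every labelling of $L(S)$ is actually realised by some copy of $\str{Q}$ in $f(\str{P})$. This is a genericity/amalgamation argument: given the target tree shape $S$, build a finite partial order extending the relevant configuration and embed it into $\str{P}$ below $f(\str{P})$; since $\str{P}$ is universal and $f(\str{P}) \cong \str{P}$, the required configuration appears. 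Hence no embedding can reduce the number of colours, giving the matching lower bound.

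**Main obstacle.** The delicate part is not the Ramsey pigeonhole (which \cite{Hubicka2020CS} supplies) but the exact bookkeeping: showing the canonical partition classes produced are in bijection with $T(\str{Q})$, i.e. that Definition~\ref{def:balko}'s four local moves (Leaf, Branching, New $\perp$, New $\preceq$) with their side conditions capture \emph{all and only} the tree shapes that (a) arise from finite subsets of $\str{P}$ after generic spreading and (b) cannot be further "diagonalised away". Proving (b) — that two distinct poset-types genuinely cannot be merged by passing to a subcopy — is the heart of the lower bound and requires carefully exhibiting, for each pair of distinct poset-types, a copy of $\str{P}$ that still sees both; the side conditions \ref{A2}, \ref{B1}, \ref{B2} are precisely what make this possible, and checking they are simultaneously necessary and sufficient is where the bulk of the technical effort goes.
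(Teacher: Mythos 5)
Your upper-bound plan (defer the pigeonhole to \cite{Hubicka2020CS} and verify that its canonical objects are exactly the poset-types of Definition~\ref{def:balko}) matches what the paper does, since the paper itself postpones the upper bound to a refinement of \cite{Hubicka2020CS}. The problem is your lower bound, which is where the paper's actual content lies, and there your argument has a genuine gap. You define the colouring by the (poset-type, labelling) of the coding words of a copy of $\str{Q}$ and then claim that every type is realised inside every subcopy $f(\str{P})$ ``since $\str{P}$ is universal and $f(\str{P})\cong\str{P}$, the required configuration appears.'' Universality of $f(\str{P})$ only gives you embeddings of finite posets into $f(\str{P})$ as abstract partial orders; it gives you no control whatsoever over the tree shape that the chosen points acquire under the \emph{ambient} encoding, which is determined by where $f$ places its image in the fixed enumeration and is adversarial. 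If genericity alone forced arbitrary prescribed shapes, the same reasoning would ``realise'' shapes that are not poset-types and contradict the upper bound; deciding which shapes persist in every subcopy is precisely the theorem. (Your later phrasing, exhibiting ``a copy of $\str{P}$ that still sees both'' types, also has the quantifier backwards: unavoidability needs every subcopy to see every type.)

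The paper closes this gap with two ingredients you do not supply. First, Theorem~\ref{thm:posetemb}: a diagonal re-embedding $\psi\colon\str{P}\to(\Sigma^*,\preceq)$ whose image closure is itself a poset-type, built level by level so that branchings and the ``new $\perp$'' / ``new $\preceq$'' events happen one at a time in an order (increasing, then decreasing, distance) compatible with conditions \ref{A2}, \ref{B1}, \ref{B2}; this is what makes the colouring $\chi_\str{A}=\tau\circ\psi$ well defined with values in $T(\str{A})$ (via Observation~\ref{obs:types}). Second, and crucially, the persistence statement Theorem~\ref{thm:embthm} is proved by the Laflamme--Sauer--Vuksanovi\'c technique \cite{Laflamme2006}: the subcopy induces an embedding $h\colon(\Sigma^*,\preceq)\to(\Sigma^*,\preceq)$, one introduces $u$-large nodes (density of preimages above $u$), and one builds maps $\alpha,\beta$ by induction on levels with the invariants \ref{I}--\ref{IV}, using Observation~\ref{obs:extend} to keep extending, so that $\tau(h[\alpha[L(S)]])=S$ for every poset-type $S$. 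Some argument of this fusion/largeness kind about the coding tree of the subcopy is unavoidable; an appeal to amalgamation in the abstract order cannot replace it, so as written your lower bound does not go through.
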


Here, we only outline the main constructions giving the lower bound on big Ramsey degrees of $\str{P}$; see Section~\ref{sec:lowerBound}.
The upper bound follows from a refinement of~\cite{Hubicka2020CS} and will appear in the full version of this abstract.

Poset-types, which can be compared to \emph{Devlin types}~\cite{todorcevic2010introduction}, have a relatively intuitive meaning that we now outline.
Words $u\lexleq v$ are \emph{compatible} if there is no $i<\min(|u|,|v|)$ such that $(u_i,v_i)=(\R,\L)$, and if there exists $j<\min(|u|,|v|)$ such that $(u_j,v_j)=(\L,\R)$ then $u\prec v$
and $u\not\perp v$. Conditions~\ref{A2}, \ref{B1} and \ref{B2} from Definition~\ref{def:balko} originate from the following properties of compatible words.

\begin{proposition}
	\label{prop:perp}
	Let $u\lexleq v\lexleq w\lexleq z$ be mutually compatible words from $\Sigma^*_i$ for some $i\in \omega$.
	\begin{enumerate}
		\item If $v\prec w$ then the following two conditions are satisfied: (a) at least one of $u\prec w$ and $u\perp v$ holds;  (b) at least one of $v\prec z$ and $w\perp z$ holds.
		\item If $u\perp z$ and $u\lexlt v\lexlt z$ then at least one of $u\perp v$ and $v\perp z$ is satisfied.
	\end{enumerate}
\end{proposition}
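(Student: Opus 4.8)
The plan is to prove both parts by a careful case analysis on the positions of the critical coordinates witnessing the various $\prec$ and $\perp$ relations, exploiting the definition of compatibility to rule out the "bad" coordinate patterns. Throughout, I fix the level $i$ and work with words of length $i$; for a pair of words $a \lexleq b$ I record the sign pattern $(a_j, b_j)$ at each coordinate $j < i$, noting that compatibility forbids the pattern $(\R,\L)$ entirely, forbids $\perp$ whenever the pattern $(\L,\R)$ occurs, and in that latter case forces $a \prec b$.

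\medskip
\noindent\emph{Part (1a): if $v \prec w$ then $u \prec w$ or $u \perp v$.} Since $v \prec w$, fix the least coordinate $k$ with $(v_k, w_k) = (\L,\R)$ and $v_j \lexleq w_j$ for all $j < k$. Suppose $u \not\perp v$; I must show $u \prec w$. First I would argue that $u_k = \L$: since $u \lexleq v$ we have $u_k \lexleq v_k = \L$, so $u_k = \L$. Next, for coordinates $j < k$ I would show $u_j \lexleq w_j$: we have $u_j \lexleq v_j \lexleq w_j$. Finally I need that $u$ and $w$ are not incomparable in the $\perp$ sense at coordinates $< k$ and that no coordinate $j < k$ spoils the witness — this is where compatibility of $u$ with $v$ (hence $u \not\perp v$) together with compatibility of $v$ with $w$ must be combined to conclude that the coordinate $k$ genuinely witnesses $u \prec w$, i.e. that $u_j \lexleq w_j$ strictly as required and no earlier $(\R, \cdot)$ pattern occurs in $u$ versus $w$. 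The subtlety is that $w_k = \R$ forces nothing about $u$ beyond $u_k = \L$, so the witness coordinate $k$ works provided the prefix condition holds, which follows from transitivity of $\lexleq$ on the prefixes. Part (1b) is the mirror image: using $v \prec w$ with witness coordinate $k$ and assuming $w \not\perp z$, one shows $v_k = \L$, $z_k = \R$ (from $w_k = \R \lexleq z_k$), and the prefix condition $v_j \lexleq z_j$ for $j<k$ from $v_j \lexleq w_j \lexleq z_j$, again checking via compatibility of $w$ with $z$ that no earlier coordinate breaks the witness.

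\medskip
\noindent\emph{Part (2): if $u \perp z$ and $u \lexlt v \lexlt z$ then $u \perp v$ or $v \perp z$.} Since $u \perp z$, there are coordinates $p, q < i$ with $u_p \lexlt z_p$ and $z_q \lexlt u_q$. The idea is to use $v$ as an intermediate: at coordinate $p$ we have $u_p \lexleq v_p \lexleq z_p$, and at coordinate $q$ we have $u_q \lexgeq v_q \lexgeq z_q$ (here I write $\lexgeq$ informally; in the actual text I will phrase it as $z_q \lexleq v_q \lexleq u_q$). Suppose for contradiction that $u \not\perp v$ and $v \not\perp z$. I would analyse coordinate $p$: since $u_p \lexlt z_p$ but $u \not\perp v$, either $u_p = v_p$ or ($u_p \lexlt v_p$ with no reverse coordinate, meaning compatibility forces things); similarly $v \not\perp z$ at $p$ constrains $v_p$ versus $z_p$. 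Pushing the strict inequality $u_p \lexlt z_p$ through $v_p$, at least one of the two sub-inequalities $u_p \lexlt v_p$, $v_p \lexlt z_p$ is strict — but a single strict coordinate does not yet give $\perp$. The real work is to combine coordinate $p$ with coordinate $q$: from $z_q \lexlt u_q$ and the intermediate $v_q$, one of $z_q \lexlt v_q$, $v_q \lexlt u_q$ holds. If $u_p \lexlt v_p$ and $v_q \lexlt u_q$ then $u \perp v$, contradiction; if $v_p \lexlt z_p$ and $z_q \lexlt v_q$ then $v \perp z$, contradiction. The only way to avoid both is to have ($u_p = v_p$ or $v_p = z_p$, inconsistently between the two coordinates) — I would show the remaining combinations force $v_p = z_p$ and $v_q = u_q$ simultaneously with the wrong strictness, contradicting $u_p \lexlt z_p$ or $z_q \lexlt u_q$.

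\medskip
\noindent\textbf{Main obstacle.} The hard part will be Part (2): the bookkeeping of which of the two witness coordinates $p, q$ for $u \perp z$ "survives" when passing through $v$, since a priori neither inequality need stay strict at the same coordinate, and one must show the strictness cannot "escape" at both $p$ and $q$ simultaneously. I expect to need compatibility of all four words crucially here (in particular compatibility of $u$ with $z$, which rules out $(\R,\L)$ and hence pins down the direction of every coordinate), whereas Part (1) is comparatively mechanical once the witness coordinate for $v \prec w$ is fixed.
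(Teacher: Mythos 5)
Your plan rests repeatedly on the principle that $a\lexleq b$ yields the coordinatewise bound $a_j\lexleq b_j$ at an arbitrary coordinate $j$: you invoke it for ``$u_k\lexleq v_k=\L$'' in (1a), for ``$w_k=\R\lexleq z_k$'' in (1b), and for both chains $u_p\lexleq v_p\lexleq z_p$ and $z_q\lexleq v_q\lexleq u_q$ in Part~(2). This principle is false: lexicographic order controls only the first coordinate where the two words differ (e.g.\ $\L\R\lexlt\X\L$, yet the second coordinates compare the other way). The fact your argument actually needs is: if $a\lexlt b$ and $a\not\perp b$, then $a_j\lexleq b_j$ for \emph{every} $j$, because the first differing coordinate already provides some $p$ with $a_p\lexlt b_p$, so any coordinate with $b_j\lexlt a_j$ would witness $a\perp b$. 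With that lemma, Part~(1) does close under your standing assumptions $u\not\perp v$, resp.\ $w\not\perp z$; note, however, that the paper avoids even this: once $(u_k,w_k)=(\L,\R)$, compatibility of the pair $u,w$ gives $u\prec w$ directly by the definition of compatible words, so there is no prefix condition to verify, and ``transitivity of $\lexleq$ on the prefixes'' is neither a valid argument nor needed.

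Part~(2), which you flag as the main obstacle, in fact collapses once the lemma is stated, and your plan for it does not work as written. Assuming $u\not\perp v$ and $v\not\perp z$, the lemma gives $u_j\lexleq v_j\lexleq z_j$ at \emph{every} coordinate, in particular at $q$, which contradicts $z_q\lexlt u_q$ outright; no interplay between the coordinates $p$ and $q$ is required. Your asserted chain $z_q\lexleq v_q\lexleq u_q$ has no justification (lexicographic betweenness of $v$ says nothing about coordinate $q$) and is in fact impossible under the very hypotheses you are trying to refute, so the case analysis built on it addresses a vacuous configuration; moreover it is left unfinished (``I would show the remaining combinations force\dots''), and closing cases such as $u_p\lexlt v_p$, $z_q\lexlt v_q$ with $v_p=z_p$, $v_q=u_q$ again requires exactly the first-difference observation above rather than bookkeeping at $p$ and $q$ alone. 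The paper's proof is the one-line version of this argument: at a witness $j$ with $z_j\lexlt u_j$, the assumption $u\not\perp v$ forces $z_j\lexlt u_j\lexleq v_j$, and then $v\lexlt z$ yields $v\perp z$.
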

\begin{proof}
	To prove~1(a), we choose $j$ such that $(v_j,w_j)=(\L,\R)$.  Now, if $u_j=L$, we have $u_j\prec w_j$
	by compatibility. If $u_j\in \{\X,\R\}$, we have $u\perp v$, since $u\lexlt v$. Part~1(b) follows from symmetry.

	For part~2, we choose $j$ such that $z_j\lexlt u_j$. If $u$ is unrelated to $v$ or if $u\preceq v$, we have $z_j\lexlt u_j\lexleq v_j$ (by compatibility) and, since $v\lexlt z$, it follows that $v\perp z$.
	The case when $v$ is unrelated to $z$ or $v\preceq z$ follows analogously.\qed
\end{proof}

One can view a poset-type $S$ as a binary branching tree and each level $S_i$ as a structure $\str{S}_i=\AmbStr{S_i}$.
It follows that all words in $S$ are mutually compatible and comparing with Proposition~\ref{prop:perp}
one can verify that if a level $S_i$ is a leaf level, then $\str{S}_{i+1}$ is isomorphic to $\str{S}_i$ with one vertex removed. If a level $S_i$ is a branching level, then $\str{S}_{i+1}$ is isomorphic to $\str{S}_i$ with one vertex $w\in S_i$ duplicated to $w\cont \X,w\cont \R\in S_{i+1}$. Observe also that $w\cont \X$ and $w\cont \R$ are unrelated. If a level $S_i$ has new $\preceq$ (or $\perp$), then $\str{S}_{i+1}$  is isomorphic to $\str{S}_i$ extended by one pair in the relation $\preceq$ (or $\perp$).

\section{The lower bound}
\label{sec:lowerBound}

Without loss of generality, we can assume that $P=\omega$
and thus we fix an (arbitrary) enumeration of $\str{P}$.
We define a function $\varphi\colon \omega\to \Alphabet^*$ by mapping $j\in P$ to a word $w$ of length $2j+2$ defined by
putting $(w_{2j},w_{2j+1})=(\L,\R)$ and, for every $i<j$, $(w_{2i},w_{2i+1})$ to $(\L,\L)$ if $j\leq_\str{P} i$, $(\R,\R)$ if $i\leq_\str{P} j$ and $(\X,\X)$ otherwise.
We set $T=\overline{\varphi[P]}$. The following result is easy to prove by induction.
\begin{proposition}[Proposition 4.7 of \cite{Hubicka2020CS}]
	The function $\varphi$ is an embedding $\str{P}\to (\Sigma^*,\preceq)$.
	More\-over, $\varphi(v)$ is a leaf of $T$ for every $v\in P$, all words in $T$ are mutually compatible, and if $v,w\in P$ are incomparable, we have $\varphi(v)\perp\varphi(w)$.
\end{proposition}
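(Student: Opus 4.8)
The plan is to prove all four assertions together from a single case analysis. Since $T=\overline{\varphi[P]}$ consists of initial segments of the leaves $\varphi(v)$, and since compatibility, $\preceq$ and $\perp$ all depend only on positions below $\min$ of the lengths, everything reduces to understanding how $\varphi(p)$ and $\varphi(q)$ compare on their common prefix for $p\neq q\in P$. Assume without loss of generality $p<q$ as natural numbers, so $|\varphi(p)|=2p+2\leq 2q+2=|\varphi(q)|$. The \emph{decisive position} is the block $(2p,2p+1)$: there $\varphi(p)$ carries its terminal block $(\L,\R)$, while $\varphi(q)$ carries the block recording the $\str P$-relation of $p$ and $q$ at index $p$, namely $(\R,\R)$ if $p\leq_\str P q$, $(\L,\L)$ if $q\leq_\str P p$, and $(\X,\X)$ if $p\perp_\str P q$.

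For the leaf claim: if some $\varphi(u)$ properly extends $\varphi(v)$, then $|\varphi(u)|>|\varphi(v)|$, hence $u>v$, and the block of $\varphi(u)$ at position $(2v,2v+1)$ is one of $(\L,\L),(\R,\R),(\X,\X)$; but it would have to equal the terminal block $(\L,\R)$ of $\varphi(v)$, which is impossible. As every word of $T$ is an initial segment of some $\varphi(u)$, no word of $T$ properly extends $\varphi(v)$, so $\varphi(v)\in L(T)$.

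For the rest, recall that for $i<p$ the block of $\varphi(p)$ at index $i$ is $(\L,\L)$, $(\R,\R)$, or $(\X,\X)$ according to whether $p\leq_\str P i$, $i\leq_\str P p$, or $i\perp_\str P p$, and similarly for $\varphi(q)$. If $p\leq_\str P q$, transitivity of $\leq_\str P$ gives that the block of $\varphi(p)$ at each index $i<p$ is lexicographically at most that of $\varphi(q)$ (for instance, $i\leq_\str P p$ forces $i\leq_\str P q$, so both blocks are $(\R,\R)$; $i\perp_\str P p$ makes $q\leq_\str P i$ impossible, so the $\varphi(q)$-block is $(\X,\X)$ or $(\R,\R)$; and $(\L,\L)$ is already minimal). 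Together with the decisive position this yields $\varphi(p)_\ell\lexleq\varphi(q)_\ell$ for all $\ell<2p+2$, with a $(\L,\R)$ block at $2p$; hence $\varphi(p)\prec\varphi(q)$, no $(\R,\L)$ block occurs between them, and $\varphi(p)\not\perp\varphi(q)$, so $\varphi(p)$ and $\varphi(q)$ are compatible. The case $q\leq_\str P p$ is the mirror image, giving $\varphi(q)\prec\varphi(p)$ and compatibility. If $p\perp_\str P q$, the decisive position already witnesses $\varphi(p)\perp\varphi(q)$ (at $2p$ we have $\varphi(p)_{2p}\lexlt\varphi(q)_{2p}$ and at $2p+1$ we have $\varphi(q)_{2p+1}\lexlt\varphi(p)_{2p+1}$), and transitivity forbids both an $(\L,\R)$ block (which would force $p\leq_\str P i\leq_\str P q$ for some $i$) and an $(\R,\L)$ block (which would force $q\leq_\str P i\leq_\str P p$), so $\varphi(p)$ and $\varphi(q)$ are compatible, the compatibility clause being vacuous. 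Combining the three cases with the injectivity of $\varphi$ (the lengths $2v+2$ are distinct) and the asymmetry of $\prec$ gives that $\varphi$ is an embedding $\str P\to(\Sigma^*,\preceq)$ — in the incomparable case there is no $(\L,\R)$ block, and $\prec$ requires one, so $\varphi(p),\varphi(q)$ are $\preceq$-incomparable — and also the last assertion about $\perp$.

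Finally, to upgrade compatibility from the leaves to all of $T$, note that compatibility of $x,y$ depends only on positions below $\min(|x|,|y|)$; writing $x=\varphi(u)|_k$, $y=\varphi(u')|_{k'}$, their common prefix is a prefix of the common prefix of $\varphi(u),\varphi(u')$ (or of $\varphi(u)$ when $u=u'$, where one is a prefix of the other), and in each of the three cases the pertinent facts — absence of the forbidden block in the appropriate orientation, and the lexicographic monotonicity that produces $\prec$ and excludes $\perp$ — are inherited by initial segments; one only has to first read off from these same comparisons which of $x,y$ is lexicographically smaller. I expect the main tedium to be the transitivity bookkeeping in the three-case analysis, each case having three sub-cases for the relation of $i<p$ to $p$, though every individual implication is a one-liner; an alternative packaging is induction on the size of the enumerated initial segment of $\str P$, as in \cite{Hubicka2020CS}.
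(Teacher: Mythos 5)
Your argument is correct. Note that the paper itself gives no proof of this proposition: it is quoted as Proposition 4.7 of \cite{Hubicka2020CS} with the remark that it "is easy to prove by induction" on the enumeration, so there is nothing to match your proof against line by line; your direct blockwise verification is a legitimate (and arguably more self-contained) packaging of the same routine computation. The key points all check out: the decisive block of $\varphi(p)$ at coordinates $(2p,2p+1)$ is $(\L,\R)$ while every block of $\varphi(q)$ at an index $i<q$ is doubled, which gives the leaf claim; transitivity of $\leq_{\str{P}}$ yields coordinatewise $\lexleq$-monotonicity below $2p+2$ in the two comparable cases (producing $\prec$ and excluding $\perp$ and the forbidden $(\R,\L)$ pattern), and excludes both $(\L,\R)$ and $(\R,\L)$ patterns in the incomparable case (so $\perp$ holds, $\preceq$ fails in both directions, and compatibility is vacuous); and since $\prec$, $\perp$ and compatibility only inspect coordinates below the minimum length, all of this is inherited by the initial segments making up $T$, once one sorts out the lexicographic orientation as you indicate. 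Two cosmetic remarks: in the mirror case $q\leq_{\str{P}}p$ the $(\L,\R)$ witness sits at position $2p+1$ rather than $2p$ (both words carry $\L$ at $2p$), so "mirror image" should be read blockwise rather than coordinatewise; and in the inheritance step it is worth saying explicitly that when the two truncations agree on all coordinates below the minimum length the pair is compatible vacuously whichever of them is $\lexleq$-smaller — you implicitly cover this, and it is the only situation where the shorter word can be the lexicographically larger-indexed one's prefix.
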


We will need the following refinement of this embedding.

\begin{theorem}
	\label{thm:posetemb}
	There exists an embedding $\psi\colon \str{P}\to (\Sigma^*,\preceq)$ such that
	$Q=\overline{\psi[\omega]}$ is a poset-type
	and $\psi(i)$ is a leaf of $Q$ for every $i\in P$.
\end{theorem}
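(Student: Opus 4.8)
The plan is to start from the embedding $\varphi$ and the tree $T=\overline{\varphi[P]}$ given by the previous proposition, and then to "refine" $T$ into a poset-type by working one level at a time, inserting auxiliary intermediate levels so that each of the four events (leaf, branching, new $\preceq$, new $\perp$) happens in isolation exactly as Definition~\ref{def:balko} requires. The key observation is that $T$ already has all the properties we need semantically --- all words are mutually compatible, the leaves realise $\str{P}$, incomparable elements go to $\perp$-related words --- so what remains is purely structural bookkeeping: the tree $T$ may have several "things happening" at a single level (several branchings, several new relations, several strands dying), whereas a poset-type must process them one at a time.

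First I would set up the inductive skeleton. Enumerate $P=\omega$ and build $\psi$ together with $Q=\overline{\psi[\omega]}$ as an increasing union of finite approximations. At stage $n$ we will have committed to the images $\psi(0),\dots,\psi(n-1)$ and to a finite poset-type $Q^{(n)}$ whose set of leaves, ordered by $\preceq$, is isomorphic to the restriction of $\str{P}$ to $\{0,\dots,n-1\}$, with $\psi(i)$ the leaf corresponding to $i$. To add the element $n$: in $\str{P}$, adding $n$ to $\{0,\dots,n-1\}$ is a finite sequence of elementary steps, namely first "split off" a new strand by a branching from some existing leaf (or start a fresh strand if $n$ is incomparable to everything so far), possibly let some old leaves die by leaf-moves so that the new strand is positioned correctly in the lexicographic order, and then add, one pair at a time, each relation $i\prec n$, $n\prec i$, or $i\perp n$ that holds in $\str{P}$. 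Each elementary step is exactly one of the four cases of Definition~\ref{def:balko}, applied to the appropriate pair $v\lexlt w$ of words on the current top level. Since a poset-type is closed under taking such one-level extensions (this is the content of the intuitive paragraph following Proposition~\ref{prop:perp}), each elementary step produces a new finite poset-type, and the union over all stages is again a poset-type.

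The main obstacle --- and where real work is needed --- is verifying the side conditions \ref{A2}, \ref{B1}, \ref{B2} at every elementary step, i.e.\ checking that when we add a new $\perp$ between $v$ and $w$, every $u$ strictly between them is already $\perp$-related to $v$ or to $w$, and the analogous statements for new $\preceq$. This is precisely what Proposition~\ref{prop:perp} is designed to supply, but to invoke it I must guarantee that the words on each level remain mutually compatible throughout the construction, and that the lexicographic positions of the strands faithfully track the order in which relations are being introduced; concretely, the order in which I schedule the elementary steps (which branchings, which leaf-deaths, which new relations, and in what left-to-right order) must be chosen so that the hypotheses "$v\prec w$" or "$u\perp z$ with $u\lexlt v\lexlt z$" of Proposition~\ref{prop:perp} hold exactly when \ref{A2}/\ref{B1}/\ref{B2} demand them. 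So the heart of the argument is a careful scheduling lemma: an explicit recipe for turning the passage from $\str{P}{\restriction}\{0,\dots,n-1\}$ to $\str{P}{\restriction}\{0,\dots,n\}$ into a sequence of Definition~\ref{def:balko} moves whose side conditions are automatically met by Proposition~\ref{prop:perp}.

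Finally I would check the two easy closure facts: that $\psi$ is an embedding $\str{P}\to(\Sigma^*,\preceq)$ (immediate, since at every stage the leaves realise the correct $\preceq$-relations and $\preceq$ only depends on a pair of words, not on the ambient tree), and that $\psi(i)$ is a leaf of the final $Q$ (each $\psi(i)$ is a leaf of $Q^{(n)}$ once created, and later moves only ever extend \emph{other} leaves or delete \emph{other} leaves, never $\psi(i)$ itself, provided we never re-touch a completed strand --- which the scheduling above ensures). The statement $Q=\overline{Q}$ and the "precisely one of four conditions per level" requirement hold by construction because we inserted a fresh intermediate level for each elementary step.
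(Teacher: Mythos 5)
Your overall strategy---start from $\varphi$ and $T=\overline{\varphi[P]}$, refine $T$ into a poset-type by inserting one intermediate level per branching/leaf/new-relation event, and use Proposition~\ref{prop:perp} to verify the side conditions---is exactly the paper's strategy. The problem is that what you label ``a careful scheduling lemma'' and defer is not a routine verification: it is the entire mathematical content of the proof, and you never exhibit the schedule. The paper's recipe is concrete: within one level of $T$, first perform all branchings (these have no side conditions), then realize the missing $\perp$ pairs in order of \emph{increasing} distance, and only afterwards realize the missing $\preceq$ pairs in order of \emph{decreasing} distance, where the distance of $w\lexlt w'$ is the number of words of that level lying lexicographically between them. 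With this order the conditions come for free from Proposition~\ref{prop:perp}: when a pair $v\lexlt w$ with $v\perp w$ is processed, part~2 of Proposition~\ref{prop:perp} gives $u\perp v$ or $u\perp w$ for every intermediate $u$, and these pairs have strictly smaller distance, hence are already realized in the image, which is \ref{A2}; when $v\prec w$ is processed, the pairs $u\prec w$ (for $u\lexlt v$) and $v\prec z$ (for $w\lexlt z$) demanded by \ref{B1} and \ref{B2} have strictly larger distance and are already realized, while the alternative disjuncts $u\perp v$, $w\perp z$ were settled in the $\perp$ phase. Without naming such an order (or proving one exists), your argument amounts to asserting that some schedule works, which is precisely the point at issue; note that a wrong order genuinely fails, e.g.\ adding a long $\perp$ pair before the shorter ones inside it can violate \ref{A2}.

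There is also a structural flaw in your inductive skeleton. In Definition~\ref{def:balko} every word of $S_i$ is extended to $S_{i+1}$ except the single word sealed in a Leaf step, so you can neither ``start a fresh strand'' nor branch ``from some existing leaf'' once that leaf was sealed at a lower level. Consequently the invariant that the leaves of $Q^{(n)}$ are exactly $\psi(0),\dots,\psi(n-1)$ cannot be maintained: once $\psi(i)$ is sealed, its $\preceq$/$\perp$ relations to every later element are already determined by which word of level $|\psi(i)|$ that later strand passes through, so the construction must carry along live, unsealed strands representing all possible relations of future elements to the part of $\str{P}$ processed so far. This is exactly what the type tree $T$ encodes and why the paper refines $T$ level by level (each pair of levels of $T$ corresponds to one new element of $P$), and it is also the reason the Leaf condition insists that the sealed word be related to everything on its level. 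Your first paragraph points in this correct direction, but the element-by-element picture in which the finished elements are the only leaves, together with the missing scheduling argument, leaves the proof incomplete.
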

\begin{proof}
	We proceed by induction on levels of $T$.
	For every $\ell$, we define an integer $N_\ell$ and a function $\psi_\ell\colon T_\ell\to \Sigma^*_{N_\ell}$.
	We will maintain the following invariants:
	\begin{enumerate}
		\item The set $\overline{\psi_\ell[T_\ell]}$ satisfies the conditions of Definition~\ref{def:balko} for all levels with the exception of $N_\ell-1$.
		\item If $\ell>0$, then, for every $u\in T_\ell$, the word $\psi_\ell(u)$ extends $\psi_{\ell-1}(u|_{\ell-1})$.
	\end{enumerate}

	We let $N_0=0$ and put $\psi_0$ to map the empty word to the empty word.
	Now, assume that $N_{\ell-1}$ and $\psi_{\ell-1}$ are already defined.
	We inductively define a sequence of functions $\psi^i_\ell\colon T_\ell\to \Sigma^*_{N_{\ell-1}+i}$. Put $\psi^0_\ell(u)=\psi_{\ell-1}(u|_{\ell-1})$.
	Now, we proceed in steps. At step $j$, apply the first of the following constructions that can be applied and terminate the procedure if none of them applies:
	\begin{enumerate}
		\item If there are distinct words $w,w'$ from $T_\ell$ such that $\psi^{j-1}_\ell(w)=\psi^{j-1}_\ell(w')$ and $w'|_{j-1}=w|_{j-1}$, we construct $\psi^j_\ell$ by extending each word in $\psi^{j-1}_\ell[T_\ell]$ by an additional character so that the conditions on the branching of $\psi^{j-1}_\ell(w)$ in Definition~\ref{def:balko} are satisfied.
		\item If there are words $w$ and $w'$ with $w\lexlt w'$ such that $w\perp w'$ and $\psi^{j-1}_\ell(w)\not \perp\psi^{j-1}_\ell(w')$ and condition  \ref{A2} is satisfied for the value range of~$\psi^{j-1}_\ell$, we construct $\psi^j_\ell$ to satisfy the conditions on new $\perp$ for $\psi^{j-1}_\ell(w)$ and $\psi^{j-1}_\ell(w')$ as given by Definition~\ref{def:balko}.
		\item If there are words $w$ and $w'$ with $w\lexlt w'$ such that $w\prec w'$ and $\psi^{j-1}_\ell(w)\not \prec\psi^{j-1}_\ell(w')$ and conditions \ref{B1} and \ref{B2} are satisfied for the value range of~$\psi^{j-1}_\ell$, we construct $\psi^j_\ell$ to satisfy the conditions on new $\prec$ for $\psi^{j-1}_\ell(w)$ and $\psi^{j-1}_\ell(w')$ as given by Definition~\ref{def:balko}.
	\end{enumerate}

	Let $J$ be the last index $j$ for which $\psi^j_\ell$ is defined. It is possible to prove that
	$\psi^J_\ell$ is actually an isomorphism  $\AmbStr{T_\ell}\to
		\AmbStr{\psi^J_\ell[T_\ell]}$. To do so, we need to verify that the procedure
	does not terminate early. Clearly, all the branching can be realized since there are no conditions on step~1. We also have $\psi^J_\ell(w)\perp \psi^J_\ell(w')\implies w\perp w'$ and $\psi^J_\ell(w)\preceq \psi^J_\ell(w')\implies w\preceq w'$ for $w,w'\in T_\ell$. To see that these implications are in fact equivalences, we define the \emph{distance} of $w$ and $w'$ as $|\{u:u\in T_\ell,w\lexlt u\lexleq w'\}|$.
	By Proposition~\ref{prop:perp}, one can add all pairs to the relation $\perp$ in the order of increasing distances to ensure that condition  \ref{A2} is satisfied and then it is possible to add all pairs of $\preceq$ in the order of decreasing distances so that conditions \ref{B1} and \ref{B2} are satisfied.

	Finally, we put $N_\ell=|\psi^J_\ell(w)|$ for some $w\in T_\ell$ and $\psi_\ell=\psi^J_\ell$.
	Once all functions $\psi_\ell$ are constructed, we can set $\psi(i)=\psi_i(i)$.\qed
\end{proof}

Given $S\subseteq \Sigma^*$, we call a level $\overline{S}_i$ \emph{interesting} if the structure $\overline{\str{S}}_i=(\overline{S}_i,\lexleq,\preceq\nobreak,\allowbreak\perp)$ is not isomorphic to  $\overline{\str{S}}_{i+1}=\AmbStr{\overline{S}_{i+1}}$
or there exist incompatible $u,v\in S_{i+1}$ such that $u|_{i}$ and $v|_{i}$ are compatible.
Let $I(S)$ be the set of all interesting levels in $\overline{\str{S}}$. Let $\tau_S\colon\overline{S}\to \Sigma^*$ be a mapping assigning every $w\in S$ the word created from $w$ by deleting all characters with indices not in $I(S)$. Define $\tau(S)=\tau_S[S]$ and call it the \emph{embedding type of $S$}. The following observation, which is a direct consequence of Definition~\ref{def:balko}, establishes that a sub-type of a poset-type is also a poset-type.

\begin{observation}\label{obs:types}
	For a poset-type $S$ and $S'\subseteq L(S)$, $\tau(\overline{S'})=\overline{\tau(S')}$ is a poset-type.\qed
\end{observation}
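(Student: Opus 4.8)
The plan is to split the assertion into a bookkeeping identity, $\tau(\overline{S'})=\overline{\tau(S')}$, and the real content, that this common set (call it $Z$) is a poset-type. For the identity I would observe that the coordinate-deletion map $\tau_R$ and the set $I(R)$ of interesting levels of $R:=\overline{S'}$ depend only on $\overline{S'}$, so they coincide with $\tau_{S'}$ and $I(S')$; and that for $w\in S'$ and $0\le i\le|w|$ the word $\tau_{S'}(w|_i)$ is the initial segment of $\tau_{S'}(w)$ of length $|I(S')\cap[0,i)|$, a quantity which sweeps through all of $0,\dots,|\tau_{S'}(w)|$ as $i$ runs from $0$ to $|w|$. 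Hence applying $\tau_{S'}$ commutes with downward closure, and $\tau(\overline{S'})=\tau_{S'}[\overline{S'}]=\overline{\tau_{S'}[S']}=\overline{\tau(S')}=:Z$, with $Z=\overline Z$ immediate.

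For the main point I would work with $R=\overline R\subseteq S$, $R_i=R\cap S_i$, recording two facts: a word of $R$ is a leaf of $R$ exactly when it already belongs to $S'\subseteq L(S)$ (so leaves of $R$ never extend), and all words of $R$ are mutually compatible because all words of $S$ are. Writing $I(R)=\{i_0<i_1<\cdots\}$ and $Z_j=\tau_R[R_{i_j}]$, I would first argue that $\tau_R$ restricted to $R_{i_j}$ is an isomorphism $\AmbStr{R_{i_j}}\to\AmbStr{Z_j}$, and then, for each $i$ with $i_j\le i<i_{j+1}$, analyse the restriction to $R$ of the step $S_i\to S_{i+1}$, which (as $S$ is a poset-type) falls under exactly one of Leaf, Branching, New~$\perp$, New~$\preceq$; recall from the discussion after Proposition~\ref{prop:perp} that each of these alters $\str{S}_i$ in one prescribed way. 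The dichotomy to establish is: either the structural feature of the step (the deleted vertex; the duplicated vertex together with both of its children; the newly added pair, in $\preceq$ or in $\perp$) fails to lie inside $R$, in which case $\AmbStr{R_{i+1}}\cong\AmbStr{R_i}$, no new incompatibility appears within $R$, $i\notin I(R)$, and $\tau_R$ simply erases coordinate~$i$; or it does lie inside $R$, in which case $i$ is some $i_j$, the relevant words ($w$, or the pair $v\lexlt w$) already belong to $R_i$ with the same $\preceq,\perp$-relations as in $S_i$, the displayed formulas of Definition~\ref{def:balko} read identically in $R_i$ and in $S_i$, and conditions \ref{A2}, \ref{B1}, \ref{B2} restrict from $S_i$ down to $R_i$. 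Since $\tau_R$ keeps coordinate~$i$ in the second case, $\AmbStr{Z_j}\to\AmbStr{Z_{j+1}}$ is an instance of that same clause; running over all levels shows $Z$ is a poset-type.

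I expect the main obstacle to be the claim that erasing the non-interesting coordinates does no harm — i.e.\ that $\tau_R$ restricted to $R_{i_j}$ really is an isomorphism of the structures $\AmbStr{\cdot}$. The lexicographic order is the delicate part, since deleting coordinates can in general reverse $\lexleq$; the key sub-claim is that the first coordinate at which two words of $R$ differ is always a branching level of $R$ (hence interesting and never erased), after which the preservation of $\preceq$ and $\perp$ follows, the incompatibility clause in the definition of \emph{interesting} being exactly what is needed to control them. Granting this, the rest is a direct transcription of Definition~\ref{def:balko}, which is why the statement can be recorded as an observation.
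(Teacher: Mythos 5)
Your proposal is correct, and since the paper records this observation with no proof at all (it is asserted to be a direct consequence of Definition~\ref{def:balko}), your write-up is essentially the argument one has to supply: split off the bookkeeping identity, classify each step $S_i\to S_{i+1}$ according to whether its ``feature'' lies in $R=\overline{S'}$, and check that $\tau_R$ is a level-wise isomorphism so that the surviving steps literally instantiate the same clauses. Your dichotomy is sound (note in the Leaf case that a word of $R_i$ is a leaf of $S$ iff it lies in $S'$, and in the degenerate Branching case the appended characters are non-decreasing in $\lexleq$, so no relation is created), and your identification of the preservation of $\lexleq$ via ``first difference occurs at a Branching level retained in $I(R)$'' is exactly right. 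The one place where the sketch under-delivers is the last clause: preservation of $\preceq$ and $\perp$ under coordinate deletion is not really controlled by the incompatibility clause in the definition of \emph{interesting} (that clause never fires here, since all words of a poset-type are mutually compatible, hence so are all words of $R$). What one actually needs is that for $u\lexlt u'$ in $R$, the \emph{first} coordinate $j$ witnessing $u\prec u'$ (a pair $(\L,\R)$) or $u\perp u'$ (a pair $(\X,\L)$ or $(\R,\X)$) must have $(u|_j,u'|_j)$ equal to the feature pair $(v,w)$ of a New~$\preceq$ or New~$\perp$ level: conditions \ref{A2}, \ref{B1}, \ref{B2} combined with Proposition~\ref{prop:perp} show that every other pair receiving such a character pattern at level $j$ was already related at level $j$, contradicting minimality of $j$. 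Hence that level's feature lies in $R$, the level is interesting, and the witnessing coordinate survives in $\tau_R$. With that sub-claim made explicit, your proof is complete.
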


Given a finite partial order $\str{A}$, we construct a function (colouring) $\chi_\str{A}\colon\binom{\str{P}}{\str{A}}\to T(\str{A})$ by setting $\chi_\str{A}(f)=\tau(\psi[f[A]])$ for every $f\in \binom{\str{P}}{\str{A}}$.
We show that $\chi_\str{A}$ is an \emph{unavoidable coloring} in the following sense, which then implies Theorem~\ref{thm:main}.

\begin{theorem}
	\label{thm:embthm}
	For every finite partial order $\str{A}$ and every $f\in \binom{\str{P}}{\str{P}}$, we have $\left\{\chi_\str{A}[f\circ g]: g\in \binom{\str{P}}{\str{A}}\right\}=T(\str{A})$.
\end{theorem}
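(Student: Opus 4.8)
\emph{Proof plan.}
I would prove the two inclusions of $\left\{\chi_\str{A}[f\circ g]:g\in\binom{\str{P}}{\str{A}}\right\}=T(\str{A})$ separately. The inclusion $\subseteq$ is essentially bookkeeping: for $g\in\binom{\str{P}}{\str{A}}$ the map $\psi\circ f\circ g$ is an embedding $\str{A}\to(\Sigma^*,\preceq)$ with finite image inside $L(Q)$, so Observation~\ref{obs:types} (with $S=Q$) already shows $\chi_\str{A}(f\circ g)=\tau(\overline{\psi[(f\circ g)[A]]})$ is a poset-type; and since the interesting levels are exactly those at which the structure $\overline{\str{S}}_i$ changes, the restriction of $\tau_{\overline{S'}}$ to the leaves is an isomorphism onto the leaf set of $\tau(\overline{S'})$, whence its leaf poset is again $\cong\str{A}$ and $\chi_\str{A}(f\circ g)\in T(\str{A})$.

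For the reverse inclusion, fix $S\in T(\str{A})$ and set $Q'=\overline{\psi[f[\omega]]}$. Since $\psi\circ f$ is an embedding $\str{P}\to(\Sigma^*,\preceq)$ and each $\psi(f(p))$ is a leaf of $Q\supseteq Q'$ (Theorem~\ref{thm:posetemb}), the tree $Q'$ inherits that all its words are mutually compatible, that $L(Q')=\psi[f[\omega]]$, and that $\psi\circ f$ is an isomorphism of $\str{P}$ onto $(L(Q'),\preceq)$. I would then reduce the theorem to the realizability statement: \emph{if $U=\overline{U}\subseteq\Sigma^*$ is infinite, all its words are mutually compatible, every word of $L(U)$ is a leaf, and $(L(U),\preceq)\cong\str{P}$, then every finite poset-type $S$ equals $\tau(\overline{W})$ for some finite $W\subseteq L(U)$.} Granting this for $U=Q'$ and picking such a $W$, the structure-preservation of $\tau$ noted above gives $(W,\preceq)\cong(L(S),\preceq)\cong\str{A}$, so $B:=f^{-1}[\psi^{-1}[W]]$ is a finite subset of $\omega$ with $(B,\leq_\str{P})\cong\str{A}$; then any isomorphism $g\colon\str{A}\to(B,\leq_\str{P})$ lies in $\binom{\str{P}}{\str{A}}$ and satisfies $\psi[f[g[A]]]=W$, hence $\chi_\str{A}(f\circ g)=\tau(\overline{W})=S$.

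I would prove the realizability statement by induction on $n=|L(S)|$. If $n=1$ then $S$ is a single point and any leaf of $U$ works, a root-to-leaf path having no interesting level. If $n>1$, let $\ell$ be the $\lexleq$-greatest leaf of $S$; by Observation~\ref{obs:types}, $S^-:=\tau(\overline{L(S)\setminus\{\ell\}})$ is a poset-type with $n-1$ leaves, realized by the inductive hypothesis as $\tau(\overline{W^-})$ for a finite $W^-\subseteq L(U)$. Since $(\{p:f(p)>M\},\leq_\str{P})\cong\str{P}$ for every $M$, the hypothesis may in fact be applied to the subtree $\overline{\psi[f[\{p:f(p)>M\}]]}$ of $U$, so $W^-$ may be taken to consist of arbitrarily long words; I would use this to leave room in the tree. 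It then remains to adjoin one further leaf $w^*\in L(U)$, branching off $\overline{W^-}$ at the correct level, with $\tau(\overline{W^-\cup\{w^*\}})=S$.

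This last step is the main obstacle. The poset-type $S$ prescribes both the $\preceq$- and $\perp$-relations of $w^*$ to the members of $W^-$ and the position — relative to the interesting levels of $\overline{W^-}$ — of the branch leading to $w^*$ and of the levels at which its new relations resolve. The relations are obtained directly from the genericity (one-point extension property) of $\str{P}\cong(L(U),\preceq)$: an element of $\str{P}$ with the prescribed pattern over $(W^-,\preceq)$ has a $\psi$-image which is a leaf of $U$ with the correct relations. The difficulty is the tree position, since $\psi\circ f$ governs the order $\preceq$ but not the level at which two images first become incomparable; realizing the exact poset-type $S$, rather than merely a copy of $\str{A}$, forces a careful analysis of which levels of $S$ are ``owned by'' $\ell$, of how they vanish under $\tau$ when $\ell$ is removed, and of how grafting the branch of $w^*$ reinstates them. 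I would carry this out using the explicit form of the words in the image of $\psi$ and of the embedding $\varphi$ it refines (Theorem~\ref{thm:posetemb} and the preceding proposition), in which each $\psi(p)$ records the relations of $p$ to the remaining elements in controlled positions, together with the freedom to stretch $W^-$ out; the remaining verifications are then routine and are deferred to the full version.
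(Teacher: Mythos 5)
Your treatment of the inclusion $\subseteq$ is fine and matches what the paper gets from Observation~\ref{obs:types}. The problem is the reverse inclusion, and you have correctly located the difficulty — controlling the \emph{tree position} of the new branch, not just the order relations of the new point — but you have not supplied the idea that resolves it, and your reduction is to an auxiliary statement that is not true as formulated. Your ``realizability statement'' assumes only that $U=\overline{U}$ is infinite, mutually compatible, and has $(L(U),\preceq)\cong\str{P}$. These hypotheses say nothing about the order in which branchings and relation-decisions occur along the levels of $U$, and that order is exactly what the embedding type $\tau(\overline{W})$ records. One can build such a $U$ in which, say, every relation between two leaves is decided at a level determined solely by their meet (e.g.\ immediately after the events of the ``older'' branch, with all events of each new element grouped consecutively); then for every finite $W\subseteq L(U)$ the interesting levels of $\overline{W}$ come in a forced pattern, and poset-types requiring a different interleaving — two branchings before either pair's relation is decided, for instance — are never of the form $\tau(\overline{W})$. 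Consequently the one-point extension property of $\str{P}$, which is all your induction step invokes, cannot close the argument: it produces a leaf $w^*$ with the prescribed $\preceq$- and $\perp$-relations to $W^-$, but gives no control over \emph{where} its branch leaves $\overline{W^-}$ or at which levels its relations resolve relative to the interesting levels of $\overline{W^-}$. This is precisely the step you defer, and it is the entire content of the theorem.

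The paper's route is different and is designed to manufacture the missing control. It first chooses an embedding $f'\colon(\Sigma^*,\preceq)\to f[\str{P}]$ of the \emph{whole tree order} (not just of $\str{P}$), sets $h=\psi\circ f'$, and then runs a Laflamme--Sauer--Vuksanovi\'c-style density argument: using the notion of a $u$-large node and Observation~\ref{obs:extend}, it builds maps $\alpha,\beta\colon\Sigma^*\to\Sigma^*$ level by level, with the invariants \ref{I}--\ref{IV} forcing all coding positions of $\{h(\alpha(u)):u\in\Sigma^*_i\}$ into disjoint, ordered blocks of levels. Because every node retains a dense set of continuations, one can \emph{choose} at each step which event (branching, new $\perp$, new $\preceq$, leaf) happens next, and hence realize any prescribed poset-type as $\tau(h[\alpha[L(S)]])$. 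If you want to salvage your leaf-by-leaf induction, you would need to first establish some such largeness property inside $\overline{\psi[f[\omega]]}$ — at which point you have essentially reconstructed the paper's argument. As written, the proposal has a genuine gap at its central step.
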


We outline the proof of Theorem~\ref{thm:embthm} in the rest of this Section.
We fix $f\in\binom{\str{P}}{\str{P}}$ and an arbitrary embedding $f'\colon(\Sigma^*,\preceq)\to f[\str{P}]$ (which exists, as every countable partial order embeds to $\str{P}$).
We set $h=\psi\circ f'$ and observe that it is an embedding $(\Sigma^*,\preceq)\to (\Sigma^*,\preceq)$. By Theorem~\ref{thm:posetemb} and Observation~\ref{obs:types}, we know that $\tau(h(\Sigma^*))$ is a poset-type and images of $\Sigma^*$ correspond to its leafs.  We will show a way to embed an arbitrary poset-type to $\tau(h(\Sigma^*))$.

We adapt a proof by Laflamme, Sauer and Vuksanovi\'c~\cite{Laflamme2006}. A word $u\in\Sigma^*$ is a \emph{successor} of $w\in\Sigma^*$ if $|u|\geq|w|$ and $u|_{|w|}=w$. A subset $A\subseteq\Sigma^*$ is \emph{dense above $u$} if every successor $u'$ of $u$ has a successor in $A$.
Given $u,v\in \Sigma^*$, we say that $v$ is \emph{$u$-large} if the set $\{u':h(u')\hbox{ is a successor of }v\}$ is dense above $u$.
\begin{observation}
	\label{obs:extend}
	Let $u,v\in G$ be vertices such that $v$ is $u$-large. Then
	\begin{enumerate}[label=(\roman*)]
		\item\label{item:u} $v$ is $u'$-large for every successor $u'$ of $u$, and
		\item\label{item:v} for every $\ell>|v|$ there exists a successor $v'$
		of $v$ of length $\ell$ and a successor $u'$ of $u$ such that $v'$ is $u'$-large.\qed
	\end{enumerate}
\end{observation}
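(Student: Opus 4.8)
The statement is a density (fusion) argument in the ternary tree $\Sigma^*$, and the only non-formal ingredient is that $h=\psi\circ f'$ is \emph{injective} --- as any composite of embeddings of partial orders is --- so that for every fixed $\ell$ the set of words $w$ with $|h(w)|<\ell$ is finite, being in injection with the finitely many words of length $<\ell$. Throughout I would write $D_v=\{w\in\Sigma^*:h(w)\text{ is a successor of }v\}$, so that ``$v$ is $u$-large'' means precisely ``$D_v$ is dense above $u$''. I also record the elementary transitivity of the ``successor'' relation: if $u'$ is a successor of $u$, then every successor of $u'$ is a successor of $u$, and hence ``dense above'' passes from $u$ to every successor of $u$.

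Part~\ref{item:u} is then immediate: if $D_v$ is dense above $u$ and $u'$ is a successor of $u$, then any successor of $u'$ is a successor of $u$, so it has a successor in $D_v$; thus $D_v$ is dense above $u'$, i.e. $v$ is $u'$-large.

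For part~\ref{item:v}, fix $\ell>|v|$ and let $v_1,\dots,v_m$ enumerate the finitely many ($3^{\ell-|v|}$) successors of $v$ of length exactly $\ell$, noting that every successor of $v$ of length $\ge\ell$ extends exactly one $v_i$ (namely its length-$\ell$ initial segment). Assume for contradiction that no $v_i$ is $u'$-large for any successor $u'$ of $u$. Setting $u^{(0)}=u$ and processing $i=1,\dots,m$, I would use the failure of ``$D_{v_i}$ dense above $u^{(i-1)}$'' to choose a successor $u^{(i)}$ of $u^{(i-1)}$ having \emph{no} successor in $D_{v_i}$; since these words are nested, $\tilde u:=u^{(m)}$ has no successor in any $D_{v_i}$. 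But $v$ is $u$-large, so by part~\ref{item:u} $D_v$ is dense above $\tilde u$, and iterating density (repeatedly extend by $\L$ and apply density) produces pairwise distinct words $a_1,a_2,\dots\in D_v$, all successors of $\tilde u$. For each $n$, $h(a_n)$ is a successor of $v$ but of no $v_i$ (as $a_n$ is a successor of $\tilde u$), so $|h(a_n)|<\ell$; thus $h$ maps infinitely many distinct words into finitely many, contradicting injectivity. Hence some length-$\ell$ successor $v'$ of $v$ is $u'$-large for some successor $u'$ of $u$.

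The only delicate point is the extraction inside the contradiction for part~\ref{item:v}: the negation of ``$D_{v_i}$ dense above $u^{(i-1)}$'' must be used to produce a \emph{single} witnessing successor $u^{(i)}$ with no successor in $D_{v_i}$, and one must check this property is inherited by the later, longer words $u^{(i+1)},\dots,\tilde u$ (it is, again by transitivity of ``successor''). Everything else is bookkeeping, and the closing contradiction uses nothing about $h$ beyond injectivity.
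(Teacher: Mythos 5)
Your proof is correct and complete; the paper gives no proof of this observation at all (it is stated with the end-of-proof mark attached), and what you supply is precisely the standard density/pigeonhole argument, going back to Laflamme--Sauer--Vuksanovi\'c, that the authors are implicitly invoking. You correctly isolate the one nontrivial ingredient --- injectivity of the embedding $h$, which bounds the set of words sent to length $<\ell$ --- and you handle the genuinely delicate step properly, namely extracting the nested witnesses $u^{(0)},\dots,u^{(m)}$ from the negations of density and checking that the ``no successor in $D_{v_i}$'' property persists along the chain.
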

\begin{proof}[of Theorem~\ref{thm:embthm}, Sketch]
	We construct functions $\alpha\colon \Sigma^*\to\Sigma^*$, $\beta\colon \Sigma^*\to\Sigma^*$  and sequences of integers $M_i$ and $N_i$, $i\in\omega$, satisfying:
	\begin{enumerate}[label=(\Roman*)]
		\item\label{I} For every $i\in \omega$, $u,v\in \Sigma^*_i$ it holds that $M_i< |\alpha(u)|\leq M_{i+1}$, $N_i< |\beta(u)|\leq|h(\alpha(u))|\leq N_{i+1}$ and if $u\lexlt v$, then $|h(\alpha(u))|<|\beta(v)|$.
		\item\label{II} For every $u,v\in \Sigma^*$ and every $j<\min(|\alpha(u)|,|\alpha(v)|)$ such that $\alpha(u)_j\neq \alpha(v)_j$ there exists $i<\min(|u|,|v|)$ satisfying $M_i=j$, $\alpha(u)_j=u_{i-1}$ and $\alpha(v)_j=v_{i-1}$.
		\item\label{III} For every $u\in \Sigma^*$ it holds that $\beta(u)$ is $\alpha(u)$-large and $h(\alpha(u))$ is a successor of $\beta(u)$.
		\item\label{IV} For every $u,u'\in \Sigma^*$ such that $u$ is a successor of $u'$ it holds that $\alpha(u)$ is a successor of $\alpha(u')$ and $\beta(u)$ is a successor of $\beta(u')$.

	\end{enumerate}
	Functions $\alpha$ and $\beta$ are built by an induction on levels of $\Sigma^*$ by repeated applications of Observation~\ref{obs:extend}. By~\ref{III} the partial maps $\alpha,\beta$ always extend. Moreover:
	\begin{claim}\label{claim}
		For every poset-type $S$ and every $i>0$, the structures
		$\str{S}_i=\AmbStr{S_i}$,
		$\str{S}'_i=\AmbStr{\alpha[S_i]}$ and $\str{S}''_i=\AmbStr{\beta[S_i]}$ are mutually isomorphic.
	\end{claim}
	Using the Claim, it is possible to verify that for every poset-type $S$ it holds that $\tau(h[\alpha[L(S)]])=S$.
	From this, Theorem~\ref{thm:embthm} follows: For every type $S\in T(\str{A})$ we have that $h(\alpha(L(S)))$ gives a copy of $\str{A}$ within $f(\str{P})$ of the given type.
\end{proof}

\paragraph{Acknowledgement}
J.~H. and M.~K. are supported by the project 21-10775S of  the  Czech  Science Foundation (GA\v CR). This article is part of a project that has received funding from the European Research Council (ERC) under the European Union's Horizon 2020 research and innovation programme (grant agreement No 810115).
M.~B. and J.~H. were supported by the Center for Foundations of Modern Computer Science (Charles University project UNCE/SCI/004). M.~K. was supported by the Charles University Grant Agency (GA UK), project 378119.
N.~D.\ is supported by National Science Foundation grant DMS-1901753. A.~Z.\ was supported by National Science Foundation grant DMS-1803489.

\end{document}